\documentclass{amsart}

\usepackage{amsmath}
\usepackage{amsfonts}
\usepackage{url}

\usepackage[hcentering,margin=1.1in]{geometry}

\usepackage{amsthm}
\newtheorem{theorem}{Theorem}

\theoremstyle{definition}

\newcommand\R{\mathbb{R}}

\newcommand\rnk[1]{n}

\newcommand\sphharmd[2]{P_{#1;#2}}
\newcommand\inlinefrac[2]{#1/#2}
\newcommand\inprod[2]{\langle#1,#2\rangle}
\newcommand\fixed[1]{\dot{#1}}
\newcommand\xfixed{\fixed{x}}
\newcommand\latshell[2]{\Lambda_{#1}({#2})}
\newcommand\genlatshell[2]{\mathcal{L}_{#1}(#2)}

\newcommand\inprodcountlp[2]{M_{#1}(L;#2)}
\newcommand\minLoz[1]{m_0}
\DeclareMathOperator\minnorm{min}

\newcommand\term[1]{\textit{#1}}

\newcommand\thmozredux{2}

\usepackage{verbatim}

\title[Refined Configuration Results for Extremal {Type~II}
Lattices]{Refined Configuration Results for\\Extremal {Type~II}
Lattices of Ranks $40$ and $80$}

\author[N.\ D.\ Elkies]{Noam D.\ Elkies}
\address{Department of Mathematics, Harvard University\newline\indent
One Oxford Street\newline \indent Cambridge, MA 02138}
\email{elkies@math.harvard.edu}

\author[S.\ D.\ Kominers]{Scott D.\ Kominers}
\address{Department of Mathematics, Harvard University\newline\indent c/o
8520 Burning Tree Road\newline \indent Bethesda, MD 20817}
\email{kominers@fas.harvard.edu, skominers@gmail.com}

\subjclass[2000]{Primary 11H55; Secondary 05B30, 11F11}
\keywords{Type~II lattice, extremal lattice, weighted theta function,
spherical design, configuration result}

\begin{document}
\begin{abstract}
 We show that, if $L$ is an extremal
 Type~II lattice of rank $40$ or $80$,
 then $L$ is generated by its vectors of norm $\minnorm(L)+2$.
 This sharpens earlier results of Ozeki, and the second author
 and Abel, which showed that such lattices $L$ are generated by
 their vectors of norms $\minnorm(L)$ and $\minnorm(L)+2$.
\end{abstract}
\maketitle

\section{Introduction}\label{intro}
\subsection{Preliminaries}
A \term{lattice} of \term{rank}~$n$ is a free $\mathbb{Z}$-module
of rank $n$ equipped with a positive-definite inner product
$\inprod{\cdot}{\cdot}: L\times L\to\mathbb{R}$.  For $x\in L$, we
call $\inprod{x}{x}$ the \term{norm}\/ of $x$. If
$\inprod{x}{x'}\in\mathbb{Z}$ for all $x,x'\in L$, then we
say that $L$ is \term{integral}.   The lattice $L$ is called
\term{even}\/ if every vector of $L$ has an even integer norm,
i.e.~if $\inprod{x}{x}\in 2\mathbb{Z}$ for all $x\in L$.
The well-known parallelogram identity,
\begin{equation}\label{eq:paridentity}2\inprod{x}{x'}=
\inprod{x+x'}{x+x'}-\inprod{x}{x}-\inprod{x'}{x'},\end{equation}
shows that an even lattice must be integral.

The \term{dual} of~$L$, denoted $L^*$, is the lattice defined by
$L^* = \left\{x'\in L\otimes\mathbb{R}: \inprod{x}{x'}\in\mathbb{Z}
\text{ for all $x\in L$} \right\}$.  If~$L=L^*$ then~$L$ is said to
be \term{self-dual}. 	A self-dual lattice is said to be of
\term{Type~II}\/ if it is even, and of \term{Type~I}\/ otherwise.

The rank of a Type~II lattice must be divisible by~$8$
(see~\cite[p.~53 (Cor.~2)]{Serre:course} and
\cite[p.~109]{Serre:course}). Mallows, Odlyzko, and
Sloane~\cite{MallowsOdlyzkoSloane} (see also \cite[p.~194]{SPLAG})
used theta functions and modular forms to show that the
\term{minimal nonzero norm}\/ (or just \term{minimal norm})
$\minnorm(L)$ of vectors in a Type~II lattice~$L$ of rank $n$
satisfies the upper bound
$ \minnorm(L) \leq 2\lfloor\inlinefrac{n}{24}\rfloor + 2 $.
The lattice $L$ is called \term{extremal}\/ if it attains equality
in this bound, that is, if
$$
 \minnorm(L) = 2\lfloor\inlinefrac{n}{24}\rfloor + 2.
$$

\subsection{Configuration Results for Extremal Type~II Lattices}
Venkov~\cite{Venkov:32} and Ozeki~\cite{Ozeki:32} showed that
if~$L$ is an extremal Type~II lattice of rank $32$ then
$L$ is generated by its vectors of minimal norm.  Additionally,
Ozeki~\cite{Ozeki:48} proved an analogous result for extremal Type~II lattices
of rank~$48$.  Recently, the second author~\cite{Kominers:56+72+96}
extended the methods of Ozeki~\cite{Ozeki:48} to obtain
analogous results for
extremal Type~II lattices of ranks~$56$, $72$, and $96$.

The following similar but weaker result has been obtained for extremal Type~II
lattices of ranks $40$, $80$, and $120$.

\begin{theorem}\label{thm:oz40r}
If $L$ is an extremal Type~II lattice of rank $40$, $80$, or $120$, then
$L$ is generated by its vectors of norms $\minnorm(L)$ and $\minnorm(L)+2$.
\end{theorem}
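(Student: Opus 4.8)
The plan is to combine the theory of weighted theta functions with a minimal-coset-representative argument. First I would record the modular input. For a harmonic polynomial $P$ of degree $d>0$, the weighted theta series $\sum_{x\in L}P(x)q^{\inprod{x}{x}/2}$ is a cusp form of weight $\rnk{}/2+d$ for $\PSL_2(\Z)$; it vanishes automatically when $d$ is odd, and when $d$ is even its coefficient of $q^{m/2}$ is the shell sum $\sum_{v\in\latshell{m}{L}}P(v)$. Because $L$ is extremal, the shells $\latshell{m}{L}$ with $0<m<\minnorm(L)$ are empty, so these cusp forms have several leading coefficients forced to be zero. Whenever the space of cusp forms of weight $\rnk{}/2+d$ is small enough, this forces the form to vanish identically, giving the spherical-design identities $\sum_{v\in\latshell{m}{L}}P(v)=0$; when that space is one dimension larger it instead couples the shell sums at consecutive norms, e.g.\ $\sum_{v\in\latshell{\minnorm(L)+2}{L}}P(v)=c_d\sum_{v\in\latshell{\minnorm(L)}{L}}P(v)$ for an explicit constant $c_d$. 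I would tabulate exactly which degrees $d$ yield a vanishing identity and which yield a coupling, separately for ranks $40$, $80$, and $120$.

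Next I would set up the contradiction hypothesis. Let $M$ be the sublattice generated by the vectors of norms $\minnorm(L)$ and $\minnorm(L)+2$, and suppose $M\neq L$. Choose $\xfixed\in L\setminus M$ of minimal norm in its coset $\xfixed+M$; since every vector of norm at most $\minnorm(L)+2$ lies in $M$, we have $\inprod{\xfixed}{\xfixed}\ge\minnorm(L)+4$. Minimality in the coset gives $\inprod{\xfixed\pm v}{\xfixed\pm v}\ge\inprod{\xfixed}{\xfixed}$ for every generator $v$ of $M$, hence $|\inprod{\xfixed}{v}|\le\inprod{v}{v}/2$. As $L$ is integral, $\inprod{\xfixed}{v}$ is then an integer confined to a short range: the values $j=\inprod{\xfixed}{v}$ for $v\in\latshell{\minnorm(L)}{L}$ lie in $\{-\minnorm(L)/2,\dots,\minnorm(L)/2\}$, and similarly for $\latshell{\minnorm(L)+2}{L}$. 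The unknowns of the problem are thus the finitely many occupation numbers $a_j=|\{v\in\latshell{\minnorm(L)}{L}:\inprod{\xfixed}{v}=j\}|$ and their analogues $b_j$ for the next shell.

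Now I would exploit that the zonal harmonic $\sphharmd{d}{\xfixed}$, the degree-$d$ harmonic polynomial depending only on $\inprod{\xfixed}{\cdot}$, evaluates on a fixed shell to a fixed polynomial in $j$. Consequently every shell sum $\sum_v\sphharmd{d}{\xfixed}(v)$, for all $d$, is a known linear form in the handful of occupation numbers $a_j$ and $b_j$, with coefficients depending on $\rnk{}$, on the shell norm, and on the single remaining parameter $p=\inprod{\xfixed}{\xfixed}$. The vanishing identities and the cross-shell couplings of the first paragraph then become a large, overdetermined linear system in these few unknowns. I expect this system to be inconsistent: either it admits no nonnegative integer solution for any admissible $p$, or it forces every $a_j$ with $j\neq0$ to vanish, which would make $\xfixed$ orthogonal to $\latshell{\minnorm(L)}{L}$; since a nonempty spherical design spans, this gives $\xfixed=0$, a contradiction. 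Either way $M=L$.

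The main obstacle is securing enough independent relations to overdetermine the system, and this is exactly where the rank enters. For ranks $80$ and $120$ the shells are spherical $7$- and $11$-designs, so many low-degree shell sums vanish outright and the system closes comfortably. Rank $40$ is the delicate case: its shells are only $3$-designs, so the bare design identities are far too few, and the argument must lean on the consecutive-shell couplings—which is precisely why both shells $\latshell{\minnorm(L)}{L}$ and $\latshell{\minnorm(L)+2}{L}$ are needed here, rather than the minimal shell alone. Verifying inconsistency then reduces to a finite, rank-by-rank linear-algebra computation over the admissible values of $p$, which I would carry out explicitly, using the known theta coefficients $\thetacoeff{\minnorm(L)}{L}$ and $\thetacoeff{\minnorm(L)+2}{L}$ of the extremal theta series.
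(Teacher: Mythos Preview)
The paper does not actually prove this theorem; it is quoted as prior work (Ozeki for rank~$40$, Kominers--Abel for ranks~$80$ and~$120$), and then used as an input to the proof of Theorem~\ref{thm:oz40/redux}. So there is no in-paper proof to compare against. That said, your outline is essentially the method of those cited papers: choose a minimal-norm $\xfixed\in L\setminus M$, use coset minimality to confine $\inprod{\xfixed}{v}$ to a short integer interval for $v$ in the generating shells, and feed the resulting occupation numbers into the linear relations coming from the vanishing of weighted theta series.

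One point deserves tightening. You treat the ``admissible values of $p=\inprod{\xfixed}{\xfixed}$'' as a finite list, but a priori $p$ is any even integer $\ge m_0+4$, so the promised ``finite linear-algebra check'' is not obviously finite. The bound is in fact hidden in your own equations: for $v$ in the shell of norm $m\in\{m_0,m_0+2\}$ coset minimality gives $|\inprod{\xfixed}{v}|\le m/2$, so $\sum_v\inprod{\xfixed}{v}^2\le (m/2)^2\,|\latshell{m}{L}|$, while the $2$-design identity forces $\sum_v\inprod{\xfixed}{v}^2=(mp/n)\,|\latshell{m}{L}|$; together these give $p\le mn/4$, which makes the case analysis genuinely finite. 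You should state this explicitly. (Your heuristic that rank~$40$ is the ``delicate'' case because $t=3$ is reasonable, but note that the real evidence is extrinsic: there exist extremal rank-$40$ lattices not generated by their minimal vectors, so the two-shell hypothesis is not an artifact of a weak method but is actually necessary.)
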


The rank~$40$ case of this result is due to
Ozeki~\cite{Ozeki:40}, while the cases of ranks $80$ and~$120$
are due to the second author and Abel~\cite{Kominers+Abel:40r}.
Ozeki~\cite{Ozeki:40} noted that
the rank~$40$ case of Theorem~\ref{thm:oz40r} is sharp,
in the sense that there exist extremal Type~II lattices of rank $40$
that are not generated by their vectors of minimal norm.

In this note, we prove the following refinement of Theorem~\ref{thm:oz40r}.
\begin{theorem}\label{thm:oz40/redux}
If $L$ is an extremal Type~II lattice of rank $40$ or $80$, then
$L$ is generated by its vectors of norm $\minnorm(L)+2$.
\end{theorem}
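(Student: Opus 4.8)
The plan is to combine Theorem~\ref{thm:oz40r} with an explicit study of how the shells $\latshell{\minnorm(L)}{L}$ and $\latshell{\minnorm(L)+2}{L}$ are situated relative to a fixed minimal vector. Fix a vector $\fixed{x}$ of norm $\minnorm(L)$, and let $M=\langle\latshell{\minnorm(L)+2}{L}\rangle$ be the sublattice generated by the vectors of norm $\minnorm(L)+2$. Since Theorem~\ref{thm:oz40r} gives $L=\langle\latshell{\minnorm(L)}{L}\cup M\rangle$, it suffices to show that every minimal vector lies in $M$. For this I would show that for each $\fixed{x}$ there is some $y\in\latshell{\minnorm(L)+2}{L}$ with $\inprod{\fixed{x}}{y}=\minnorm(L)/2$; the parallelogram identity~\eqref{eq:paridentity} then gives $\inprod{y-\fixed{x}}{y-\fixed{x}}=\minnorm(L)+2$, so that $\fixed{x}=y-(y-\fixed{x})$ realizes $\fixed{x}$ as a difference of two vectors of $\latshell{\minnorm(L)+2}{L}$, whence $\fixed{x}\in M$ and therefore $M=L$.

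Everything thus reduces to proving that the inner-product count $\inprodcountlp{\minnorm(L)/2}{\xfixed}=\#\{y\in\latshell{\minnorm(L)+2}{L}:\inprod{\fixed{x}}{y}=\minnorm(L)/2\}$ is positive. To control the counts $\inprodcountlp{\ell}{\xfixed}$, I would use the weighted theta functions $\theta_{L,\sphharmd{d}{\xfixed}}$, where $\sphharmd{d}{\xfixed}$ denotes the degree-$d$ zonal harmonic with pole $\fixed{x}$: this is a cusp form of weight $n/2+d$ whose $q$-expansion is supported in degrees at least $\minnorm(L)/2$, and whose coefficient in degree $m/2$ is the zonal moment $\sum_{y\in\latshell{m}{L}}\sphharmd{d}{\xfixed}(y)$, a Gegenbauer-weighted sum of the counts $\inprodcountlp{\ell}{\xfixed}$. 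Because $L$ is extremal these coefficients already vanish below degree $\minnorm(L)/2$, so for each $d$ for which the space of cusp forms of weight $n/2+d$ admits no nonzero element with this many vanishing low-order coefficients, the form $\theta_{L,\sphharmd{d}{\xfixed}}$ must vanish identically. A dimension count shows that this forcing occurs for $d=2,6$ in rank $40$ and for $d=2,4,6,10$ in rank $80$ (the odd degrees contributing nothing, as $\sphharmd{d}{\xfixed}(-y)=-\sphharmd{d}{\xfixed}(y)$), and each such $d$ yields one linear Gegenbauer relation among the $\inprodcountlp{\ell}{\xfixed}$ on each shell. For the smallest non-forced degree ($d=4$ in rank $40$, $d=8$ in rank $80$), $\theta_{L,\sphharmd{d}{\xfixed}}$ is a computable scalar multiple of the distinguished cusp form with the correct leading term, furnishing an extra relation that couples the moments of the two shells.

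To close the system I would also use the involution $y\mapsto\fixed{x}-y$: since $\fixed{x}-y$ is again a lattice vector, the extreme inner-product counts on one shell coincide with specific counts on the other---for instance $\inprodcountlp{3}{\xfixed}=\#\{z\in\latshell{\minnorm(L)}{L}:\inprod{\fixed{x}}{z}=1\}$ in rank $40$. Combining the Gegenbauer relations with these cross-shell identities, the symmetry $\inprodcountlp{\ell}{\xfixed}=\inprodcountlp{-\ell}{\xfixed}$, the normalizations $\sum_{\ell}\inprodcountlp{\ell}{\xfixed}=\thetacoeff{\minnorm(L)+2}{L}$ and its minimal-shell analogue (with the shell cardinalities read off from the unique extremal theta series), and the admissibility bound $|\ell|\le\sqrt{\minnorm(L)\,(\minnorm(L)+2)}$ sharpened by the requirement that $\inprod{\fixed{x}\pm y}{\fixed{x}\pm y}\ge\minnorm(L)$, one obtains a determined---indeed overdetermined---linear system for the whole distribution $\{\inprodcountlp{\ell}{\xfixed}\}$ on each shell. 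Solving it gives explicit integer values, and one then checks directly that $\inprodcountlp{\minnorm(L)/2}{\xfixed}>0$.

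The main obstacle is this final piece of bookkeeping, and it is sharpest in rank $40$: there only $d=2$ and $d=6$ force vanishing, so the fewest relations are available, and I expect to need the $d=6$ relation together with the cross-shell identity and the non-negativity of every $\inprodcountlp{\ell}{\xfixed}$ in order to pin the distribution down, the $d=4$ relation then serving as a consistency check. In rank $80$ the additional forced degrees $d=4,10$ leave more slack. Once $\inprodcountlp{\minnorm(L)/2}{\xfixed}>0$ is established for every minimal vector $\fixed{x}$, the reduction of the first paragraph yields $M=L$ and hence the theorem.
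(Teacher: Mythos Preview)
Your reduction and the key observation---that it suffices to show $\inprodcountlp{\minLoz{n}/2}{\xfixed}>0$ for every minimal vector~$\xfixed$, because then $\xfixed=y-(y-\xfixed)$ is a difference of two vectors in $\latshell{\minLoz{n}+2}{L}$---coincide exactly with the paper's setup. The divergence is in how each of you establishes that $\inprodcountlp{\minLoz{n}/2}{\xfixed}\neq 0$.

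The paper argues by contradiction: suppose $\inprodcountlp{\minLoz{n}/2}{\xfixed}=0$. Then, since the remaining inner products~$j$ with $\inprodcountlp{j}{\xfixed}\neq 0$ satisfy $|j|\in\{0,1,\ldots,\tfrac{\minLoz{n}}{2}-1,\tfrac{\minLoz{n}}{2}+1\}$, there are only $\tfrac{\minLoz{n}}{2}+1$ unknowns, and the $\tfrac{\minLoz{n}}{2}$ vanishing zonal-harmonic sums (your forced degrees $d\in S_n$) on the single shell $\latshell{\minLoz{n}+2}{L}$ already cut the solution space down to a single ray. One then computes that this ray has an entry of negative sign, contradicting $\inprodcountlp{j}{\xfixed}\ge 0$. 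No shell sizes, no cross-shell identities, no distribution on the minimal shell, and no non-forced degree are needed.

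Your direct route---compute the full distribution $\{\inprodcountlp{j}{\xfixed}\}$ by adjoining the normalization $\sum_j \inprodcountlp{j}{\xfixed}=|\latshell{\minLoz{n}+2}{L}|$, the cross-shell bijection $\inprodcountlp{\minLoz{n}/2+1}{\xfixed}=\#\{z\in\latshell{\minLoz{n}}{L}:\inprod{\xfixed}{z}=\tfrac{\minLoz{n}}{2}-1\}$, and the analogous design relations on $\latshell{\minLoz{n}}{L}$---is valid and would also succeed: the minimal-shell distribution is already determined by its own design relations (so the system really is determined, and your hedge about needing non-negativity to ``pin the distribution down'' is unnecessary), after which the $(\minLoz{n}+2)$-shell system closes. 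The payoff is that you obtain the actual values of all $\inprodcountlp{j}{\xfixed}$ rather than merely their nonvanishing; the cost is that you must import the extremal theta coefficients and the minimal-shell analysis, and invoke the non-forced degree $d=4$ (or $d=8$), none of which the paper's contradiction argument requires.
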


\section{The Proof of Theorem~\thmozredux}

Let~$L$ be an extremal Type~II lattice of rank $n$, and let
$\minLoz{n}=\minnorm(L)$.
Adopting the notation of~\cite{Kominers+Abel:40r}, we write
$\latshell{j}{L}= \{x\in L: \inprod{x}{x}=j\}$
for the set of norm-$j$ vectors of $L$, and denote
by $\genlatshell{j}{L}$ the lattice generated by $\latshell{j}{L}$.
Also following~\cite{Kominers+Abel:40r}, we define, for $\xfixed \in \R^n$,
$$
\inprodcountlp{j}{\xfixed} =
  \left\vert
    \{x\in \latshell{\minLoz{n}+2}{L}:
    \inprod{\xfixed}{x}=j\}
   \right\vert,
$$
noting that $\inprodcountlp{-j}{\xfixed}=\inprodcountlp{j}{\xfixed}$
for any $\xfixed$ and $j$.

We recall that Venkov~\cite{Venkov:reseaux} proved using
weighted theta functions and modular forms that, for each $j>0$
such that $\latshell{j}{L} \neq \emptyset$, the shell
$\latshell{j}{L}$ is a spherical $(t\frac{1}{2})$-design, where
$t$ is $11$, $7$, or $3$ according as~$n$ is
congruent to~$0$, $8$, or $16$ modulo~$24$.
This means that
\begin{equation}
\label{eq:vanishsum}
  \sum_{x\in \latshell{j}{L}} P(x) = 0
\end{equation}
holds for all spherical harmonic polynomials $P$\/ of degree~$s$ with
$1 \leq s \leq t$ as well as $s = t+3$.  We shall use \eqref{eq:vanishsum}
with $P(x) = \sphharmd{s}{\xfixed}(x)$ for $s$ even and $\xfixed\in \R^n$,
where $\sphharmd{d}{\xfixed}$ is the zonal spherical harmonic
polynomial of degree~$d$ (see~\cite{Vilenkin}).

Now take $n = 40$ or $n = 80$, so that $m_0 = n/10$ and $t = m_0 - 1$.
Using the modularity of the theta function of~$L$, we calculate that
$|\latshell{m_0+2}{L}|$ is $87859200$ for $n=40$, and $7541401190400$
for $n=80$; in~particular $\latshell{m_0+2}{L} \neq \emptyset$.
Let $S_n$ be the set of $m_0/2$ even values of~$s$ for which
we can apply \eqref{eq:vanishsum} to $\sphharmd{s}{\xfixed}$;
that is, $S_{40} = \{2,6\}$ and $S_{80} = \{2,4,6,10\}$.

We now proceed to prove Theorem~\ref{thm:oz40/redux}.

\begin{proof}[Proof of Theorem~\ref{thm:oz40/redux}]
By Theorem~\ref{thm:oz40r}, it suffices to show that every vector in
$\latshell{\minLoz{n}}{L}$ is contained in
$\genlatshell{\minLoz{n}+2}{L}$. We therefore suppose that there is
some $\xfixed\in\latshell{\minLoz{n}}{L}$ not in
$\genlatshell{\minLoz{n}+2}{L}$, seeking a contradiction.

For any $x\in \latshell{\minLoz{n}+2}{L}$, we must have
\begin{equation}\label{eq:inprodbound}
\vert \inprod{x}{\xfixed} \vert  \in  \left\{0,1,\ldots,
\frac{\minLoz{n}}{2}-1,\frac{\minLoz{n}}{2}+1\right\}.
\end{equation}  Indeed, if $\inprod{\xfixed}{\pm
x}>\frac{\minLoz{n}}{2}+1$, then $[\xfixed]$ contains a nonzero vector
$x\mp \xfixed$ of norm $$\inprod{x\mp \xfixed}{x\mp
\xfixed}=\inprod{x}{x}\mp
2\inprod{x}{\xfixed}+\inprod{\xfixed}{\xfixed}<\inprod{\xfixed}{\xfixed}=\minLoz{n}$$
by~\eqref{eq:paridentity}, contradicting the extremality of $L$.
Furthermore, we may assume
$\inprod{x}{\xfixed} \neq \pm \inlinefrac{\minLoz{n}}{2}$,
else we would have $x\mp \xfixed\in
\latshell{\minLoz{n}+2}{L}$ by~\eqref{eq:paridentity}, whence
$x = \pm\xfixed+(x\mp \xfixed)\in \genlatshell{\minLoz{n}+2}{L}$
follows.  That is, we may assume $\inprodcountlp{m_0/2}{\xfixed} = 0$.

Now, when we take $P = \sphharmd{s}{\xfixed}$
in~\eqref{eq:vanishsum}, we obtain $P(x) = Q(\inprod{x}{\xfixed})$
for some even polynomial~$Q$\/ in one variable.
For any such polynomial~$Q$, we have
\begin{align*}
\sum_{x\in\latshell{\minLoz{n}+2}{L}} Q(\inprod{x}{\xfixed})
 & = Q(0) \cdot \inprodcountlp{0}{\xfixed}
   + 2\sum_{j=1}^{(\inlinefrac{\minLoz{n}}{2})+1}
     Q(j) \cdot\inprodcountlp{j}{\xfixed}
   \\
 & = Q(0) \cdot \inprodcountlp{0}{\xfixed}
   + 2\sum_{j=1}^{(\inlinefrac{\minLoz{n}}{2})-1}
     Q(j) \cdot\inprodcountlp{j}{\xfixed}
   + 2 Q\left(\frac{m_0}{2}+1 \right)
     \cdot\inprodcountlp{(\inlinefrac{\minLoz{n}}{2})+1}{\xfixed}.
\end{align*}
Letting $s$ vary over $S_n$, we obtain
$\inlinefrac{\minLoz{n}}{2}$ homogeneous linear equations in the
$(\inlinefrac{\minLoz{n}}{2})+1$ variables
$$\inprodcountlp{0}{\xfixed},\ldots,\inprodcountlp{(\inlinefrac{\minLoz{n}}{2})-1}{\xfixed},
\inprodcountlp{(\inlinefrac{\minLoz{n}}{2})+1}{\xfixed}.$$
In each of our cases $n=40$ and $n=80$, we find that these equations
are linearly independent and thus have a unique solution up to scaling.
We compute that for $n=40$ the $\inprodcountlp{j}{\xfixed}$ for
$j=0,1,3$ are proportional to
$$
4688, \, 4293, \, -37,
$$
while for $n=80$ the $\inprodcountlp{j}{\xfixed}$ for
$j=0,1,2,3,5$ are proportional to
$$
5661456, \, 3946750, \, 711000, \, 88875, \, -553.
$$
But this is impossible because
$\inprodcountlp{j}{\xfixed}\geq 0$ for all~$j$
by definition,
and if every $\inprodcountlp{j}{\xfixed}$ vanished
then we would have $\latshell{\minLoz{n}+2}{L} = \emptyset$.
\end{proof}

\section{Remarks}

It is not known whether the conclusion of Theorem~\ref{thm:oz40/redux}
holds for $L$ extremal and Type~II of rank~$120$.  We cannot directly
adapt our approach to this case, because when $n=120$
(and $\minLoz{n}=12$) the system used in
the proof of Theorem~\ref{thm:oz40/redux} yields nonnegative ratios
among the values $\inprodcountlp{j}{\xfixed}$ for
$j \in \{0,1,2,3,4,5,7\}$.
However, the first author~\cite{Elkies:40r} has obtained a different
refinement of
Theorem~\ref{thm:oz40r} in the rank-$120$ case: any extremal Type~II
lattice of rank~$120$ is generated by its vectors of minimal norm.

\section*{Acknowledgements}
The first author was partly supported by National Science Foundation
grant DMS-0501029.  The second author was partly supported by a
Harvard Mathematics Department Highbridge Fellowship and a Harvard
College Program for Research in Science and Engineering Fellowship.

\bibliographystyle{amsalpha}
\bibliography{references}

\providecommand{\bysame}{\leavevmode\hbox to3em{\hrulefill}\thinspace}
\providecommand{\MR}{\relax\ifhmode\unskip\space\fi MR }
\providecommand{\MRhref}[2]{%
  \href{http://www.ams.org/mathscinet-getitem?mr=#1}{#2}
}
\providecommand{\href}[2]{#2}
\begin{thebibliography}{MOS75}

\bibitem[CS99]{SPLAG}
J.~H. Conway and N.~J.~A. Sloane, \emph{{Sphere Packing, Lattices and Groups}},
  3rd ed., Springer-Verlag, 1999.

\bibitem[Elk09]{Elkies:40r}
N.~D. Elkies, \emph{On the quotient of an extremal {T}ype~{II} lattice of rank
  $40$, $80$, or $120$ by the span of its minimal vectors}, {in preparation},
  2009.

\bibitem[KA08]{Kominers+Abel:40r}
S.~D. Kominers and Z.~Abel, \emph{Configurations of rank-$40r$ extremal even
  unimodular lattices ($r=1,2,3$)}, Journal de Th\'eorie des Nombres de
  Bordeaux \textbf{20} (2008), 365--371.

\bibitem[Kom09]{Kominers:56+72+96}
S.~D. Kominers, \emph{Configurations of extremal even unimodular lattices},
  International Journal of Number Theory \textbf{5} (2009), 457--464.

\bibitem[MOS75]{MallowsOdlyzkoSloane}
C.~L. Mallows, A.~M. Odlyzko, and N.~J.~A. Sloane, \emph{Upper bounds for
  modular forms, lattices and codes}, Journal of Algebra \textbf{36} (1975),
  68--76.

\bibitem[Oze86a]{Ozeki:32}
M.~Ozeki, \emph{On even unimodular positive definite quadratic lattices of
  rank~$32$}, Mathematische Zeitschrift \textbf{191} (1986), 283--291.

\bibitem[Oze86b]{Ozeki:48}
\bysame, \emph{On the configurations of even unimodular lattices of rank~$48$},
  Archiv der Mathematik \textbf{46} (1986), 54--61.

\bibitem[Oze89]{Ozeki:40}
\bysame, \emph{On the structure of even unimodular extremal lattices of
  rank~$40$}, Rocky Mountain Journal of Mathematics \textbf{19} (1989),
  847--862.

\bibitem[Ser73]{Serre:course}
J.-P. Serre, \emph{{A Course in Arithmetic}}, Springer-Verlag, 1973.

\bibitem[Ven84]{Venkov:32}
B.~B. Venkov, \emph{Even unimodular {E}uclidean lattices in dimension $32$},
  Journal of Mathematical Sciences \textbf{26} (1984), 1860--1867.

\bibitem[Ven01]{Venkov:reseaux}
\bysame, \emph{R\'eseaux et designs sph\'eriques}, {R\'eseaux Euclidiens,
  Designs Sph\'eriques et Formes Modulaires}, {Monographie de L'Enseignement
  Mathematique}, vol.~37, Enseignement Mathematique, Gen\`eve, 2001, (in
  French), pp.~10--86.

\bibitem[Vil68]{Vilenkin}
N.~J. Vilenkin, \emph{{Special Functions and the Theory of Group
  Representations}}, {Translations of Mathematical Monographs}, vol.~22,
  American Mathematical Society, 1968.

\end{thebibliography}

\end{document}